\documentclass[10pt]{amsart}

\usepackage{amsopn}
\usepackage{amsmath,amsthm,amssymb, amscd, amsfonts}

\usepackage[pdftex]{hyperref}  


\theoremstyle{theorem}
\newtheorem{teo}{\bf Theorem}[section]
\newtheorem{cor}[teo]{\bf Corollary}

\newtheorem{prop}[teo]{\bf Proposition}
\newtheorem{lema}[teo]{\bf Lemma}

\theoremstyle{definition}

\theoremstyle{remark}
\newtheorem{nota}[teo]{\it Remark}

\numberwithin{equation}{section}

\newcommand\Ad{\operatorname{Ad}}

\newcommand\Aff{\operatorname{Aff}}

\newcommand\Exp{\operatorname{Exp}}

\newcommand\Iso{\operatorname{Iso}}
\newcommand\Lie{\operatorname{Lie}}

\newcommand\Spin{\operatorname{Spin}}

\newcommand\Tr{\operatorname{Tr}}


\newcommand\aff{\mathfrak{aff}}
\renewcommand\gg{\mathfrak{g}}
\newcommand\ga{\mathfrak{a}}

\newcommand\gh{\mathfrak{h}}
\newcommand\gk{\mathfrak{k}}
\newcommand\gm{\mathfrak{m}}
\newcommand\tr{\mathfrak{tr}}

\newcommand\ch{\mathcal{H}}

\newcommand\cv{\mathcal{V}}

\makeatletter\renewcommand\theenumi{\@roman\c@enumi}\makeatother

\begin{document}

\title{On the affine group of a normal homogeneous manifold}

\author{Silvio Reggiani} \address{Facultad de Matem\'atica, Astronom\'\i a y F\'\i sica, Universidad Nacional de
C\'ordoba, Ciudad Universitaria, 5000 C\'ordoba, Argentina}
\email{reggiani@famaf.unc.edu.ar}

\date{\today}

\thanks {2000 {\it  Mathematics Subject Classification}.  Primary 53C30; Secondary 53C35}

\thanks {{\it Key words and phrases}. Naturally reductive, normal homogeneous, canonical connection, transvection group,
affine group, isometry group}

\thanks{Supported by CONICET. Partially supported by ANCyT, Secyt-UNC and CIEM}

\begin{abstract}
A very important class of homogeneous Riemannian manifolds are the
so-called normal homogeneous spaces, which have associated a
canonical connection. In this work we obtain geometrically the
(connected component of the) group of affine transformations with
respect to the canonical connection for a normal homogeneous space.
The na\-tu\-ra\-lly reductive case is also treated. This completes
the geometric calculation of the isometry group of naturally
reductive spaces. In addition, we prove that for normal homogeneous
spaces the set of fixed points of the full isotropy is a torus. As
an application of our results it follows that the holonomy group of
a homogeneous fibration is contained in the group of (canonically)
affine transformations of the fibers, in particular this holonomy
group is a Lie group (this is a result of Guijarro and Walschap).
\end{abstract}

\maketitle

\section{Introduction}\label{introduction}

Compact normal homogeneous Riemannian manifolds, or, more
ge\-ne\-ra\-lly, naturally reductive spaces, are a very important
family of homogeneous spaces. These spaces appear in a natural way
as a generalization of symme\-tric spaces. Associated to a
naturally reductive space there is a canonical connection. For
symmetric spaces the Levi-Civita connection is a canonical
connection. For general naturally reductive spaces this is not
anymore true, but they have the following important property that
characterizes them: there exists a canonical connection which has
the same geodesics as the Levi-Civita connection.

Examples of naturally reductive and normal homogeneous spaces are
the so-called isotropy irreducible spaces, strongly or not, which
were classified by Wolf \cite{Wo} (1968), in the strong case, and
Wang and Ziller \cite{WZ2} (1991) in the general case.

If  $M = G/H$ is a compact naturally reductive space with associated
canonical connection $\nabla^c$, then the group $\Aff(\nabla^c)$, of
$\nabla^c$-affine transformations, is a compact subgroup of
$\Iso(M)$ (this is done in Section \ref{G-invariant}). Moreover, if
$M$ is a normal homogeneous space, then we shall prove that the flow
of any $G$-invariant field is $\nabla^c$-affine and therefore, an
isometry (see Theorem \ref{2.4} and Corollary \ref{2.5}). The aim of
this paper is to compute, in a geometric way, the (connected
component of the identity of the) full $\nabla^c$-affine group.
Namely, our main result is the following theorem.

\begin{teo}\label{main}
Let $M = G/H$ be a compact normal homogeneous space, $\gg = \Lie(G)$
and $\gh = \Lie(H)$. Let $\nabla^c$ be the canonical connection
associated to the reductive decomposition $\gg = \gh \oplus
\gh^\bot$. Then, the affine group of the canonical connection is
given by
$$\Aff_0(\nabla^c) = G_1 \times K \quad \text{(almost direct product),}$$
where $K$ denotes the connected subgroup of $\nabla^c$-affine
transformations whose Lie algebra consists of the $G$-invariant
fields and $G_1$ is the semisimple part of $G$.
\end{teo}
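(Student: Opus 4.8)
The plan is to pass to the Lie algebra level and prove the infinitesimal identity $\aff(\nabla^c) = \gg_1 \oplus \Lie(K)$, where the two summands commute and meet only in $0$; integrating this and using that $G_1$ and $K$ are connected then yields the almost direct product. Throughout I set $L = \Aff_0(\nabla^c)$, a \emph{compact} connected group which, by the discussion of Section~\ref{G-invariant}, is contained in $\Iso_0(M)$; since $G \subseteq L$ is already transitive, so is $L$, and I identify $\gg$ with its image in $\aff(\nabla^c)$ under the fundamental-field map. The key bookkeeping observation is that a Killing field is $G$-invariant precisely when it centralizes every fundamental field of $G$; hence $\Lie(K)$ is exactly the centralizer $\mathfrak{z}_{\aff}(\gg)$ of $\gg$ in $\aff(\nabla^c)$.

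The easy inclusion and the almost-direct structure come first. One has $G_1 \subseteq G \subseteq \Aff(\nabla^c)$ and, by Corollary~\ref{2.5}, $K \subseteq \Aff(\nabla^c)$; both groups being connected, $G_1 \cdot K \subseteq L$. Since $\gg$ is the Lie algebra of a compact group it is reductive, $\gg = \gg_1 \oplus \mathfrak{z}(\gg)$, and every central element $Z \in \mathfrak{z}(\gg)$ gives a fundamental field commuting with all of $\gg$, i.e. a $G$-invariant field; thus $\exp \mathfrak{z}(\gg) \subseteq K$ and $G = G_1 \cdot \exp \mathfrak{z}(\gg) \subseteq G_1 \cdot K$. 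Because $\Lie(K) = \mathfrak{z}_{\aff}(\gg)$ centralizes $\gg$, we get $[\gg_1, \Lie(K)] = 0$, so $G_1$ and $K$ commute. Finally, a field lying in both $\gg_1$ and $\Lie(K)$ would be a central element of the semisimple algebra $\gg_1$, hence $0$; therefore $\gg_1 \cap \Lie(K) = 0$ and $G_1 \cap K$ is discrete, making the product almost direct.

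The substantive step is the reverse inclusion $\aff(\nabla^c) \subseteq \gg_1 + \Lie(K)$. Using that $L$ is compact I write $\aff(\nabla^c) = \mathfrak{l}_{ss} \oplus \mathfrak{z}(\mathfrak{l})$ as the sum of its semisimple part and its center. Any $Z \in \mathfrak{z}(\mathfrak{l})$ centralizes the subalgebra $\gg \subseteq \mathfrak{l}$, so the corresponding field is $G$-invariant and $\mathfrak{z}(\mathfrak{l}) \subseteq \Lie(K)$; everything thus reduces to proving $\mathfrak{l}_{ss} = \gg_1$, i.e. that passing from $G$ to the full affine group produces no new \emph{semisimple} directions, only new invariant (toral) fields. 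To control this I intend to use the transvection algebra $\tr$ of $\nabla^c$: it is intrinsic to the connection, hence an ideal of $\aff(\nabla^c)$, it is contained in $\gg$, and its semisimple part is $\gg_1$. An affine field is moreover determined by its $1$-jet $X \mapsto (X_o, (\nabla^c X)_o)$ at the base point, and parallelism of the torsion and curvature of $\nabla^c$ forces the isotropy part $(\nabla^c X)_o$ into the reductive normalizer of the holonomy. Combining this with the structure result that the fixed-point set of the full isotropy is a torus should let me peel off the semisimple contribution and see that it already sits in $\gg_1$, while the remaining affine fields centralize $\gg$ and are absorbed into $\Lie(K)$.

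Granting $\mathfrak{l}_{ss} = \gg_1$ and $\mathfrak{z}(\mathfrak{l}) \subseteq \Lie(K)$, one obtains $\aff(\nabla^c) = \gg_1 \oplus \mathfrak{z}_{\aff}(\gg) = \gg_1 \oplus \Lie(K)$; exponentiating gives $L = G_1 \cdot K$, which by the second paragraph is an almost direct product, as claimed. I expect the main obstacle to be exactly the identity $\mathfrak{l}_{ss} = \gg_1$: the delicate point is ruling out \emph{extra} semisimple affine transformations, which needs simultaneously that $\Aff_0(\nabla^c)$ acts by isometries, the parallelism of the torsion and curvature, and the torus structure of the isotropy fixed-point set. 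Purely algebraic containment of subalgebras of a compact Lie algebra does not suffice, so the geometry of the canonical connection must be brought in precisely at this point.
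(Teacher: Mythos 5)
Your first two paragraphs are correct and essentially coincide with how the paper finishes its argument (the absorption of the center of $\gg$ into $K$ and the discreteness of $G_1 \cap K$ is exactly the content of the proof of Corollary \ref{main2}; incidentally, the fact that $G$-invariant fields are $\nabla^c$-affine is Theorem \ref{2.4}, not \ref{2.5}). The genuine gap is in your ``substantive step'': the reduction to $\mathfrak{l}_{ss} = \gg_1$ is a reduction to a \emph{false} statement. Take $M = G$ a compact simple Lie group with a bi-invariant metric, presented as the normal homogeneous space $G/\{e\}$ (trivial isotropy, $\gm = \gg$). The $G$-invariant fields are then the left-invariant fields (the action being by left translations), their flows are the right translations, so $K \cong G$ is semisimple and non-abelian, and $\Aff_0(\nabla^c) = G \times K$ has semisimple part $\gg \oplus \gk$, strictly larger than $\gg_1 = \gg$. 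Indeed $\mathfrak{l}_{ss} = \gg_1$ together with $\mathfrak{z}(\mathfrak{l}) \subseteq \Lie(K)$ would force $\Lie(K) = \mathfrak{z}_{\aff}(\gg) = \mathfrak{z}(\mathfrak{l})$, i.e.\ $K$ abelian, which is exactly what fails here. You appear to have conflated $F_p$ (the fixed-point set of $G_p$, which is the Lie group $K$ with a bi-invariant metric, possibly semisimple) with $S_p$ (the fixed-point set of the \emph{full} isotropy $\Iso_0(M)_p$), which is what Theorem \ref{torus} asserts to be a torus. Invoking Theorem \ref{torus} here would in any case be circular: in the paper it is a consequence of Theorem \ref{main} via Corollary \ref{main2}.

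The missing idea, and the paper's actual route, is to upgrade $\gg$ from a subalgebra to an \emph{ideal} of $\aff(\nabla^c)$, with no reference to semisimple parts at all. One proves that $G = \Tr(\nabla^c)$ (Proposition \ref{G = Tr}): the orthogonal complement $\ga$ of $\tr(\nabla^c) = [\gm,\gm] + \gm$ in $\gg$ is orthogonal to $\gm$, hence contained in $\gh$, hence invariant under every isotropy group, hence zero by effectivity --- this is the second place where normal homogeneity is used (the first being Theorem \ref{2.4}). Since the transvection group is always normal in $\Aff_0(\nabla^c)$, it follows that $\gg$ is an ideal of $\aff(\nabla^c)$; compactness of $\Aff_0(\nabla^c)$ then yields a \emph{complementary ideal} $\gg'$ (the orthogonal complement with respect to a bi-invariant metric), and every $X \in \gg'$ satisfies $[X,\gg]=0$, so $\gg' \subseteq \gk$ and $\aff(\nabla^c) = \gg + \gk$. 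Your own first two paragraphs then complete the proof. Note that your sketch already records that $\tr(\nabla^c)$ is an ideal of $\aff(\nabla^c)$ contained in $\gg$; what you lack is the equality $\tr(\nabla^c) = \gg$, which is precisely the lever that makes the whole argument work.
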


Note that given $p \in M$, $K$ is naturally identified with $F_p$,
the connected component by $p$ of the set of fixed points of the
isotropy group $H = G_p$. There is another natural identification
for $F_p$. Let $\bar K(p) = \{k|_{F_p}: k \in G,\, k \cdot F_p =
F_p\}$, then $\bar K(p)$ acts simply transitively on $F_p$. In this
way $F_p$ is naturally identified with a Lie group. Both groups $K$
and $\bar K(p)$ are isomorphic, and their actions on $F_p$ can be
regarded as right and left multiplication respectively.

Theorem \ref{main} also holds for $M$ naturally reductive if one
replaces $\Lie(K)$ by the $\Tr(\nabla^c)$-invariant fields whose
associated flow are $\nabla^c$-affine (see Remark \ref{nrcase}).
Recall that $\Tr(\nabla^c)$ is the group of transvections of the
canonical connection, which is a normal subgroup of
$\Aff(\nabla^c)$.

It is a non-trivial problem to decide, for naturally reductive
metrics, when the group  $\Aff_0(\nabla^c)$ coincides with the full
(connected) isometry group $\Iso_0(M)$. In \cite {OR} it is proved
that this is always true, except for spheres. Such a result depends
strongly on a Berger-type result, the so-called Skew-torsion
Holonomy Theorem, see also \cite{AF} and \cite{Na}. Namely,

\begin{teo} [\cite{OR}] \label{fullisometry}
Let $M = G/H$ be a compact naturally reductive space, which is
locally irreducible, and let $\nabla^c$ be the associated canonical
connection. Assume that $M$ is neither (globally) isometric to a
sphere, nor to a real projective space. Then
\begin{enumerate}
\item $\Iso_0(M) = \Aff_0(\nabla^c).$
\item If $\Iso (M) \not\subset \Aff (\nabla^c)$ then $M$
is isometric to a simple  Lie group, endowed with a bi-invariant
metric.
\end{enumerate}
\end{teo}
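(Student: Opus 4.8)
The plan is to prove the two inclusions separately, the only nontrivial one being $\Iso_0(M)\subseteq\Aff_0(\nabla^c)$, and to recast it as a uniqueness statement for the canonical connection to which the Skew-torsion Holonomy Theorem can be applied. The inclusion $\Aff_0(\nabla^c)\subseteq\Iso_0(M)$ is free, since for compact naturally reductive spaces the canonically affine transformations are already known to be isometries. For the converse I would note that a Killing field $X$ is $\nabla^c$-affine iff $\mathcal{L}_X\nabla^c=0$; writing $\nabla=\nabla^c+\tfrac12 T$ with $T$ the totally skew, $\nabla^c$-parallel torsion, and using $\mathcal{L}_X\nabla=0$, this is equivalent to $\mathcal{L}_X T=0$. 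Equivalently, for $\phi\in\Iso_0(M)$ the connection $\phi_*\nabla^c$ is again metric, has totally skew parallel torsion $\phi_*T$, and shares its geodesics with $\nabla$, so $\phi$ is affine exactly when $\phi_*\nabla^c=\nabla^c$. Thus the whole assertion is equivalent to uniqueness, up to the exceptional cases, of such a canonical connection.

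Next I would reduce this to a pointwise algebraic statement at a base point $p$. Since $\Tr(\nabla^c)$ acts transitively and consists of affine maps, it suffices to treat the isotropy: a Killing field $X$ with $X_p=0$ yields $B=(\nabla X)_p\in\mathfrak{so}(T_pM)$, and $(\mathcal{L}_X T)_p=-\,B\cdot T_p$, the action of $B$ as a derivation on the $3$-form $T_p$. Moreover $S:=\mathcal{L}_X T$ satisfies a linear homogeneous first-order differential equation along curves, because $\mathcal{L}_X$ commutes with $\nabla$ and $\nabla T$ is algebraically determined by $T$ (as $\nabla^c T=0$); hence $S\equiv0$ as soon as $S_p=0$. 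Therefore $\Iso_0(M)=\Aff_0(\nabla^c)$ becomes equivalent to the claim that the full linear isotropy algebra $\mathfrak{h}'\subseteq\mathfrak{so}(T_pM)$ annihilates $T_p$. By local irreducibility $\mathfrak{h}'$, which contains the holonomy, acts irreducibly on $V:=T_pM$.

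I would then feed the possible failure of this into the Skew-torsion Holonomy Theorem. Assuming $T\neq0$ (if $T=0$ the space is symmetric and the claim is classical), form the skew-torsion holonomy system $[V,T_p,\mathfrak g]$, where $\mathfrak g\subseteq\mathfrak{so}(V)$ is generated by the holonomy, the torsion operators $(T_p)_v:=T_p(v,\cdot,\cdot)$, and a hypothetical $B\in\mathfrak{h}'$ with $B\cdot T_p\neq0$; it acts irreducibly. The Berger-type classification leaves only two possibilities: either $\mathfrak g=\mathfrak{so}(V)$, or $V$ is a compact simple Lie algebra with $\mathfrak g=\ad$ and $T_p$ a multiple of the Cartan $3$-form $\langle[\cdot,\cdot],\cdot\rangle$. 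The first alternative corresponds to the round sphere and the real projective space, excluded by hypothesis; outside these the only admissible skew torsion compatible with the irreducible holonomy is $T_p$ itself, so $\mathfrak{h}'$ must annihilate $T_p$ and part (1) follows. In the second alternative $M$ is isometric to a compact simple Lie group with a bi-invariant metric; there the bi-invariance of $T$ makes both left and right translations affine, so $\Iso_0=\Aff_0$ persists, while the two flat connections $\nabla^{\pm}$ are interchanged by the inversion $x\mapsto x^{-1}$, an isometry that is not $\nabla^c$-affine, which gives part (2).

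The main obstacle is this third step: setting up the skew-torsion holonomy system so that the hypotheses of the classification are genuinely met — in particular verifying the compatibility of the relevant torsion operators with the algebra under consideration and that irreducibility is preserved — and then extracting the sharp geometric conclusions from the two-case output. One must match the $\mathfrak{so}(V)$ case precisely with the excluded space forms, and in the Lie-group case argue that the \emph{only} way an isometry can fail to be affine is by swapping $\nabla^{+}$ and $\nabla^{-}$, which simultaneously pins down the global isometry type in part (2) and the fact that the identity components still coincide in part (1).
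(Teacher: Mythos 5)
This theorem is not actually proved in the paper you were given: it is imported verbatim from the companion paper \cite{OR}, and the present paper only records that its proof ``depends strongly on'' the Skew-torsion Holonomy Theorem. So your proposal can only be measured against that strategy, and you have identified it correctly: use $\Aff_0(\nabla^c)\subset\Iso_0(M)$ (compactness, Proposition \ref{2.5} of this paper), characterize affinity of an isometry by preservation of the torsion, reduce via transitivity of $\Tr(\nabla^c)$ to Killing fields vanishing at $p$, and show by the parallel-transport/ODE argument that such a field is affine if and only if $B=(\nabla X)_p$ annihilates $T_p$; all of this is correct and is essentially how \cite{OR} sets the problem up. The equivalence with uniqueness of the canonical connection is also the right way to see part (ii).

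The genuine gap is exactly the step you yourself flag as ``the main obstacle,'' and it is not a technicality that can be absorbed. Two problems. First, your justification of irreducibility is wrong as stated: the full linear isotropy algebra $\gh'$ need not contain the Riemannian holonomy and need not act irreducibly even when $M$ is locally irreducible (for a Berger sphere, $\gh'\simeq\mathfrak{u}(1)$ acts reducibly on $\mathbb{R}^3$, while the holonomy is all of $\mathfrak{so}(3)$). Your construction survives this only because you put the holonomy and the torsion operators into $\mathfrak{g}$ by hand, and that creates the second, fatal problem: with the holonomy inside $\mathfrak{g}$, the alternative $\mathfrak{g}=\mathfrak{so}(V)$ occurs for many spaces that are neither round spheres nor projective spaces. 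Any compact, locally irreducible naturally reductive space which is non-symmetric and non-K\"ahler has full Riemannian holonomy $\SO(n)$ (homogeneous spaces cannot be Ricci-flat unless flat, which rules out the exceptional holonomies, and the quaternionic-K\"ahler homogeneous spaces are symmetric); examples include the Berger space $\SO(5)/\SO(3)$ and Aloff--Wallach spaces with normal metrics --- i.e.\ the bulk of the spaces the theorem is about. For all of these your dichotomy lands in the $\mathfrak{so}(V)$ branch and yields no contradiction with $B\cdot T_p\neq 0$, so part (i) is simply not proved for them. The identification ``transitive case $\Rightarrow$ round sphere or $\mathbb{R}P^n$'' is \emph{not} part of the Skew-torsion Holonomy Theorem; in \cite{OR} it is the hard geometric content, established through the uniqueness theorem for canonical connections (where the sphere, $\mathbb{R}P^n$ and bi-invariant metrics on simple Lie groups emerge as the precise exceptions), and the same missing ingredient is what you would need for the converse direction of part (ii), namely that $\Iso(M)\not\subset\Aff(\nabla^c)$ forces $M$ to be a simple Lie group. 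As written, your argument proves the theorem only on the spaces where the algebra generated by the holonomy, the torsion operators and $B$ is a proper subalgebra of $\mathfrak{so}(V)$.
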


We have the following corollary which determines, in a geometric
way, the full isometry group.

\begin{cor}\label{main2}
Let $M = G/G_p$, $p \in M$, be a compact normal homogeneous space. Assume that $M$ is locally irreducible and
that $M \neq S^n$, $M \neq \mathbb RP^n$. Write $G = G_0 \times G_1$ as a almost direct product, where $G_0$ is
abelian and $G_1$ is a semisimple Lie group of the compact type. Then
$$\Iso_0(M) = G_1 \times K,$$
where $K$ denotes the connected component by $p$ of the set of fixed
points of $G_p$ (regarded as a Lie group). In particular, $\Iso(M)$
is semisimple if and only if $K$ is semisimple.
\end{cor}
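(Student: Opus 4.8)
The plan is to obtain Corollary \ref{main2} by combining Theorem \ref{main} with Theorem \ref{fullisometry}, together with the identification of the factor $K$ recorded just after Theorem \ref{main}. Since $M = G/G_p$ is compact and normal homogeneous, it is in particular naturally reductive, with canonical connection $\nabla^c$. By hypothesis $M$ is locally irreducible and is neither (globally) isometric to a sphere nor to a real projective space, so the assumptions of Theorem \ref{fullisometry} are met; part (1) of that theorem then yields the identity $\Iso_0(M) = \Aff_0(\nabla^c)$.

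Next I would describe the right-hand side by means of Theorem \ref{main}, which gives the almost direct product $\Aff_0(\nabla^c) = G_1 \times K$, where $G_1$ is the semisimple part of $G$ and $\Lie(K)$ consists of the $G$-invariant fields. In the decomposition $G = G_0 \times G_1$ with $G_0$ abelian and $G_1$ semisimple of the compact type, the factor $G_1$ is exactly this semisimple part, so the notation is consistent. Substituting into the previous identity gives $\Iso_0(M) = G_1 \times K$. Finally, by the remark following Theorem \ref{main}, $K$ is naturally identified with $F_p$, the connected component through $p$ of the fixed point set of the isotropy group $G_p$, which is precisely the Lie group $K$ appearing in the statement.

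For the semisimplicity assertion I would pass to Lie algebras. A Lie group is semisimple if and only if its identity component is, so it suffices to examine $\Iso_0(M)$. The almost direct product decomposition induces a direct sum of ideals $\Lie(\Iso_0(M)) = \gg_1 \oplus \gk$, where $\gg_1 = \Lie(G_1)$ is semisimple. Hence the radical of $\Lie(\Iso_0(M))$ coincides with the radical of $\gk$, so $\Iso(M)$ is semisimple exactly when $\gk$ is, that is, exactly when $K$ is semisimple.

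Since both Theorem \ref{main} and Theorem \ref{fullisometry} are already in hand, there is no deep obstacle; the corollary is an assembly of these two results. The only points requiring care are verifying that a compact normal homogeneous space is indeed naturally reductive, so that Theorem \ref{fullisometry} applies, and confirming that the factor $K$ produced abstractly in Theorem \ref{main} coincides with the geometric fixed point component while the two meanings of the semisimple part $G_1$ agree.
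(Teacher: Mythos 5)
Your derivation is formally consistent with the statements you were given, and at the top level it follows the same route as the paper: combine Theorem \ref{fullisometry}(i) with Theorem \ref{main} and identify $K$ with the fixed-point component $F_p$. However, there is a structural problem you could not have seen: in the paper, the proof of Theorem \ref{main} only establishes the equality $\Aff_0(\nabla^c) = G \cdot K$ and then explicitly defers the rest (``to complete the proof of Theorem \ref{main} see the proof of Corollary \ref{main2} below''). That is, the refinement of $G \cdot K$ into the almost direct product $G_1 \times K$ --- which is exactly the part of Theorem \ref{main} you cite --- is proved in the paper \emph{inside} the proof of this very corollary. So, read against the paper's logical structure, your argument is circular: the substantive content of the corollary's proof is precisely the decomposition step you take as given.

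The missing work is short, and you should supply it. From Theorem \ref{fullisometry} and the independently proved part of Theorem \ref{main} one has $\Iso_0(M) = \Aff_0(\nabla^c) = G \cdot K$. First, $G_0 \subset K$: since $G = G_0 \times G_1$ is an almost direct product with $G_0$ abelian, $G_0$ is central in $G$, so the Killing fields induced by $\gg_0 = \Lie(G_0)$ commute with all of $G$, i.e.\ they are $G$-invariant fields; hence $\gg_0 \subset \gk$ and $G \cdot K = G_1 \cdot K$. Second, this product is almost direct: if $X \in \gg_1 \cap \gk$, then, since the fields in $\gk$ are $G$-invariant, $[X, \gg_1] = 0$, so $X$ lies in the center of the semisimple algebra $\gg_1$ and therefore $X = 0$. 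These two observations are the paper's actual proof, and with them $\Iso_0(M) = G_1 \times K$ follows. Your remaining points are fine: the identification of $K$ with $F_p$ via the remark after Theorem \ref{main}, and the semisimplicity assertion via the splitting $\Lie(\Iso_0(M)) = \gg_1 \oplus \gk$ into commuting ideals, so that the radical of $\Lie(\Iso_0(M))$ equals the radical of $\gk$.
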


\smallbreak

We also study the set of fixed points of the isotropy of the full
$\nabla^c$-affine group. By making use of the above corollary we
obtain the following result.

\begin{teo}\label{torus}
Let $M$ be a compact normal homogeneous space and let $S_p$ the connected component
by $p$ of the set of fixed points of $\Iso_0(M)_p$. Then $S_p$ is a torus
(eventually trivial).

\end{teo}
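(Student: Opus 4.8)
The plan is to realize $F_p$ as a compact connected Lie group carrying a bi-invariant metric, and then to show that the full isotropy $\Iso_0(M)_p$ acts on $F_p$ through \emph{all} the inner automorphisms of this group. Since the common fixed set of the inner automorphisms of a compact Lie group is the center, whose identity component is a torus, this will force $S_p$ into a flat torus and finish the proof. First I would record the inclusion $S_p \subseteq F_p$: because $G_p \subseteq \Iso_0(M)_p$, the set of fixed points of $\Iso_0(M)_p$ is contained in that of $G_p$, and passing to the connected components through $p$ gives $S_p \subseteq F_p$. Next, using the identifications described after Theorem \ref{main}, I regard $F_p$ as the compact connected Lie group $\bar K(p)$ with identity $p$, on which $K$ acts by right translations and $\bar K(p)$ by left translations. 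Both actions are by isometries of $M$ (recall $K \subseteq \Aff_0(\nabla^c) \subseteq \Iso_0(M)$, and $\bar K(p)$ is induced by elements of $G \subseteq \Iso_0(M)$), and both preserve $F_p$; since $F_p$ is moreover totally geodesic, the induced Riemannian metric on $F_p = \bar K(p)$ is invariant under left and right translations, hence bi-invariant.

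The key step is to produce enough elements of $L := \Iso_0(M)_p$. Given $a \in \bar K(p)$, I would compose the isometry of $M$ inducing left translation by $a$ with the isometry inducing right translation by $a^{-1}$. The resulting isometry preserves $F_p$, restricts there to the inner automorphism $x \mapsto a\,x\,a^{-1}$, and fixes $p$ (the identity of the group), so it lies in $L$. Thus $L$ induces on $F_p$ every inner automorphism of $\bar K(p)$. Consequently $S_p$, being fixed pointwise by $L$ and contained in $F_p$, lies in the common fixed set of all these inner automorphisms, namely the center; passing to the component through $p$ yields $S_p \subseteq Z(\bar K(p))_0$.

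To conclude, I would invoke that for the bi-invariant metric the center $Z(\bar K(p))_0$ is a flat, totally geodesic torus inside $F_p$, and hence inside $M$. Since $S_p$ is closed (a fixed-point set of isometries), connected, and totally geodesic through $p$, it is a closed connected totally geodesic submanifold of a flat torus through the identity, and therefore a subtorus; in particular $S_p$ is a torus (possibly trivial). The main obstacle is the bookkeeping of the first paragraph: one must check carefully that both the left and right translations of $\bar K(p)$ are genuinely realized by elements of $\Iso_0(M)$, so that their product indeed lands in $L$, and that the induced metric is honestly bi-invariant so that the center is flat and totally geodesic. Once these identifications (supplied by Theorem \ref{main} and the description of $\Iso_0(M)$ in Corollary \ref{main2}) are secured, the reduction to the center is immediate.
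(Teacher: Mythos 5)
Your proof is correct, but it takes a genuinely different route from the paper's. The paper first reduces to the locally irreducible case (passing to the universal cover) in order to apply Corollary \ref{main2} --- which rests on the Skew-Torsion Holonomy Theorem of \cite{OR} --- so that $\Iso_0(M) = G\cdot K$; from this the full isotropy acts on $F_p$ exactly by inner automorphisms of $\bar K(p)$, and the paper concludes that $S_p$ coincides (locally) with the Euclidean de Rham factor of the symmetric space $F_p$, hence is a flat torus. You instead manufacture elements of $\Iso_0(M)_p$ by hand: the composition of a left translation $L_a$ (realized by an element of $G$ preserving $F_p$) with the right translation $R_{a^{-1}}$ (realized by an element of $K$, which acts transitively on $F_p$ because the $G$-invariant fields span $T_qF_p$ at every $q\in F_p$, the isotropy being constant along $F_p$), which lies in $\Iso_0(M)$, fixes $p$, and restricts to conjugation by $a$. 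This pins $S_p$ inside $Z(\bar K(p))_0$, and your subtorus lemma (a closed connected totally geodesic submanifold of a flat torus through the identity is a subtorus) finishes the argument. The trade-off: the paper's route yields the sharper conclusion that $S_p$ \emph{equals} the Euclidean factor $Z(\bar K(p))_0$, but at the cost of the deep theorem, the universal-cover reduction, and (strictly speaking) separate treatment of the sphere and real projective space, which Corollary \ref{main2} excludes and the paper's proof does not address; your route gives only the containment $S_p \subseteq Z(\bar K(p))_0$, but that suffices for the statement, and it uses nothing beyond Theorem \ref{2.4}, Proposition \ref{2.5} and the group structure of $F_p$ from Section \ref{isofixed}, so it applies uniformly with no case analysis. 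One correction: your closing appeal to Corollary \ref{main2} is unnecessary --- and is best dropped, since invoking it would obligate you to exactly the reductions you never perform; everything you actually use is already secured by the more elementary results.
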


As an application of our main results it follows that the holonomy
group of a homogeneous fibration is contained in the group of
(canonically) affine transformations of the fibers, in particular
this holonomy group must be a Lie group (this is a result of
Guijarro and Walschap \cite{GW}).

 \medbreak

The author would like to thank Carlos Olmos for useful discussions
and comments on the topics of this paper.

\section{Preliminaries}\label{preliminaries}

Let $M = G/H$ be a compact homogeneous Riemannian manifold, where
$G$ is a Lie subgroup of $\Iso(M)$. Let
$$\gg  = \gh \oplus \gm$$
be a reductive decomposition of the Lie algebra of $G$ (where $\gh$
is the Lie algebra of $H$ and $\gm$ is an $\Ad(H)$-invariant
subspace of $\gg$). Associated to this decomposition there is a
canonical connection $\nabla^c$ in $M$, which is $G$-invariant. The
$\nabla^c$-geodesics through $p = eH$ are given by
$$\Exp(tX) \cdot p, \quad X \in \gm.$$
Moreover, the $\nabla^c$-parallel transports along these geodesics
are given by $\Exp(tX)_*$. The canonical connection has parallel
curvature and torsion. In general, any $G$-invariant tensor is
$\nabla^c$-parallel. So, in particular any geometric tensor, such as
the metric tensor and the Riemannian curvature tensor, is
$\nabla^c$-parallel.

An important class of this spaces are the so-called naturally reductive spaces.
Geometrically, $M$ is naturally reductive if the Riemannian geodesics coincide with
the $\nabla^c$-geodesics. This definition is equivalent to the following algebraic
condition:
$$\langle [X, Y]_\gm, Z \rangle + \langle Y , [X, Z]_\gm \rangle = 0$$
for all $X, Y, Z \in \gm \simeq T_pM$. In fact, this follow from the
Koszul formula and the Killing equation.

For a naturally reductive space we can compute explicitly the
Levi-Civita connection and the canonical connection. In fact,
$$(\nabla_{\widetilde X} \widetilde Y)_p = \mbox{$\frac{1}{2}$} [\widetilde X, \widetilde Y](p)
= - \mbox{$\frac{1}{2}$} [X, Y]_\gm,$$
$$(\nabla_{\widetilde X}^c \widetilde Y)_p = [\widetilde X, \widetilde Y](p) = - [X, Y]_\gm,$$
where $\widetilde W$ is the Killing field on $M$ induced by $W \in
\gm$. In fact, since $\nabla_{\widetilde X} {\widetilde X} = 0$, one
has that $\nabla_{\widetilde X} {\widetilde Y} = -\nabla_{\widetilde
Y} {\widetilde X}$, and so
$$[\widetilde X, \widetilde Y] =
\nabla_{\widetilde X} {\widetilde Y} - \nabla_{\widetilde Y}
{\widetilde X} = 2\nabla_{\widetilde X} {\widetilde Y}.$$ The second
identity is a direct consequence of the formula of the Lie
derivative in terms of the flow.

\smallbreak

A distinguishes class of naturally reductive spaces are the normal
homogeneous spaces. In this case $\gm$ is the orthogonal
com\-ple\-ment of $\gh$ with respect to a bi-invariant metric in
$G$, that is,
$$\gg = \gh \oplus \gh^\bot$$
is the associated reductive decomposition. Then the quotient
projection $G \to M$ is a Riemannian submersion and so, maps
horizontal geodesics of $G$ into geodesics of $M$.


\section{$G$-invariant fields and the canonical connection}\label{G-invariant}

Let $M = G/G_p$ ($p \in M$) be a Riemannian homogeneous space with a
reductive decomposition $\gg = \gh \oplus \gm$. If $g \in G$, $\gg =
\Ad(g)\gh \oplus \Ad(g)\gm$ is also a reductive decomposition, which
has associated the same canonical connection, since $\nabla^c$ is
$G$-invariant. Thus we obtain the geodesics through $q = g \cdot p$,
which are given by $\Exp(tY)g \cdot p$, $Y \in \Ad(g)\gm$, and the
parallel trans\-ports along these geodesics are given by
$\Exp(tY)_*$.

\smallbreak

Next, we prove that the flow of any $G$-invariant fields is
$\nabla^c$-affine, for a normal homogeneous space.

Let $G$ be a Lie group acting on $M$ by $\nabla^c$-affine diffeomorphisms and let
$X$ be a $G$-invariant field on $M$ with local flow $\varphi_t$ (i.e.\ $g_*(X) = X$
for all $g \in G$, which is equivalent to the fact that any element  of $G$ commutes
with $\varphi_t$). Let
$$F_p = \{q \in M: G_p \cdot q = q\}$$
be the set of fixed points of the isotropy at $p$. Then, as it is
well known, $F_p$ is a closed and totally geodesic submanifold of
$M$. Moreover,
\begin{equation}\label{2.1}
\varphi_t(F_p^o) = F_p^o,
\end{equation}
where $F_p^o$ is the connected component of $F_p$ that contains $p$.
In fact, let $q \in F_p$, then $\varphi_t(q) = \varphi_t(G_p \cdot
q) = G_p \cdot \varphi_t(q)$, hence $\varphi_t(F_p) \subset F_p$.
Applying this to $\varphi_{-t}$, we obtain $\varphi_t(F_p^o) =
F_p^o$.

The next step is to observe that if the canonical conection is nice
(the closure of $G$ is compact in this case, since $M$ is compact)
and $G$ acts transitively on $M$, then the isotropy group does not
change along the fixed points, that is
\begin{equation}\label{puntos fijos}
G_p = G_q \quad \text{for all $q \in F_p$.}
\end{equation}
This is easy to prove. In fact, if $q = g \cdot p \in F_p$ then
$G_q = g G_p g^{-1}$, but $G_p = g G_p g^{-1}$.

\begin{nota}\label{2.3}
Let $G$ be a Lie group and $H$ a Lie subgroup of $G$, then the Lie
algebra of $H$ is $\Ad(N(H))$-invariant, where $N(H)$ is the
normalizer of $H$ in $G$. Moreover, if $G$ admits a bi-invariant
metric then, the orthogonal subspace to the Lie algebra of $H$ is
also $\Ad(N(H))$-invariant.
\end{nota}

%

\begin{teo}\label{2.4}
Let $M = G/G_p$ be a compact normal homogeneous space and let
$\nabla^c$ be the canonical connection associated to the reductive
decomposition $\gg = \gg_p \oplus \gg_p^\bot$. Then the flow of any
$G$-invariant field is $\nabla^c$-affine.
\end{teo}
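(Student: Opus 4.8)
The plan is to show that the local flow $\varphi_t$ of a $G$-invariant field $X$ preserves the $\nabla^c$-geodesics (with their affine parameter) and commutes with $\nabla^c$-parallel transport along them, and then to recover affineness from these two facts. Since $\nabla^c$ has nonvanishing torsion, the point to keep in mind is that preserving geodesics alone does \emph{not} force $\varphi_t$ to be affine; the parallel-transport condition is indispensable, and it is there that the normal homogeneous structure (through Remark \ref{2.3}) enters. The whole argument rests on one elementary observation: both the $\nabla^c$-geodesics and the $\nabla^c$-parallel transports are realized by the left action of one-parameter groups $\Exp(sY)$ on $M$ (as recalled at the beginning of Section \ref{G-invariant}), and $\varphi_t$, being the flow of a $G$-invariant field, commutes with every element of $G$.

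First I would fix an arbitrary point $q_0 = g \cdot p$ and the $\nabla^c$-geodesic $\gamma(s) = \Exp(sY)g \cdot p$ with $Y \in \Ad(g)\gm$. Applying $\varphi_t$ and using that it commutes with $\Exp(sY)g \in G$ gives $\varphi_t(\gamma(s)) = \Exp(sY)\cdot(g \cdot q)$, where $q := \varphi_t(p)$. Now I invoke \eqref{2.1} to get $q \in F_p^o$, and then \eqref{puntos fijos}, which yields $G_q = G_p$; writing $q = h \cdot p$ this means $hG_ph^{-1} = G_p$, i.e.\ $h \in N(G_p)$. By Remark \ref{2.3} the bi-invariant metric makes $\gm = \gg_p^\bot$ invariant under $\Ad(N(G_p))$, so $\Ad(h)\gm = \gm$ and hence $\Ad(gh)\gm = \Ad(g)\gm \ni Y$. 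Since $\varphi_t(q_0) = gh \cdot p$, the curve $\varphi_t \circ \gamma$ is exactly the $\nabla^c$-geodesic through $\varphi_t(q_0)$ with initial velocity in $\Ad(gh)\gm$, so $\varphi_t$ maps $\nabla^c$-geodesics to $\nabla^c$-geodesics preserving the parameter.

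Finally I would check parallel transport. Along both $\gamma$ and $\tilde\gamma := \varphi_t \circ \gamma$ the $\nabla^c$-parallel transport is given by $\Exp(sY)_*$ (the second one being legitimate precisely because $Y \in \Ad(gh)\gm$, by the previous step). The commutation of $\varphi_t$ with $\Exp(sY)$ then gives $(\varphi_t)_* \circ \tau^\gamma_s = \tau^{\tilde\gamma}_s \circ (\varphi_t)_*$, since both sides coincide with $(\Exp(sY))_* \circ (\varphi_t)_*$. To conclude, I use the standard formula $\nabla^c_Y Z|_{q_0} = \frac{d}{ds}\big|_{s=0}(\tau^\gamma_s)^{-1}(Z_{\gamma(s)})$, valid along any curve with initial velocity $Y$ for an arbitrary (possibly non-symmetric) connection. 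Combining it with the parallel-transport identity, and with the fact that $(\varphi_t)_*$ is linear and commutes with $d/ds$, yields $\nabla^c_{(\varphi_t)_*Y}\big((\varphi_t)_*Z\big) = (\varphi_t)_*\big(\nabla^c_Y Z\big)$, which is the affineness of $\varphi_t$. The only genuine obstacle is the torsion-induced necessity of the parallel-transport step; it is overcome cleanly because the normalizer condition coming from Remark \ref{2.3} guarantees that the initial velocity of the image geodesic stays inside $\Ad(gh)\gm$, so that $\Exp(sY)_*$ remains the true $\nabla^c$-parallel transport along $\tilde\gamma$.
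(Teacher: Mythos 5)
Your proof is correct and follows essentially the same route as the paper's: you use the commutation of $\varphi_t$ with $G$, equalities \eqref{2.1} and \eqref{puntos fijos} together with Remark \ref{2.3} to place $\varphi_t(p)$ in the orbit of $N(G_p)$, and then verify that $\nabla^c$-geodesics and $\nabla^c$-parallel transports (both realized by $\Exp(sY)$ and $\Exp(sY)_*$) are preserved. The only differences are expository: you work directly at an arbitrary point $q_0 = g\cdot p$ instead of reducing to $p$ by homogeneity, and you spell out the final standard step (preservation of geodesics plus parallel transport implies affineness, via $\nabla^c_Y Z = \frac{d}{ds}\big|_{s=0}(\tau^\gamma_s)^{-1}Z_{\gamma(s)}$), which the paper asserts without detail.
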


\begin{proof}
Let $X$ be a $G$-invariant field with associated flow $\varphi_t$.
We will prove that $\varphi_t$ maps geodesics into geodesics and
$\nabla^c$-parallel fields along these geodesics into
$\nabla^c$-parallel fields along the image geodesics. Note that from
this fact we obtain that $\varphi_t$ is $\nabla^c$-affine.

By the homogeneity of $M$ it suffices to prove this for geodesics
starting at $p$. These geodesics are given by $\Exp(sY)\cdot p$, $Y
\in \gg_p^\bot$. Since $X$ is $G$-invariant, $\varphi_t$ commutes
with $G$ and hence
$$\varphi_t(\Exp(sY)\cdot p) = \Exp(sY) \cdot \varphi_t(p).$$
Now, by equalities \ref{2.1} and \ref{puntos fijos}, $\varphi_t(p)
\in F_p$ and $G_{\varphi_t(p)} = G_p$. Let $g \in G$ be such that
$\varphi_t(p) = g \cdot p$. Observe that $g$ must lie in the
normalizer of $G_p$ in $G$. In fact,
$$(g^{-1} G_p g) \cdot p = (g^{-1} G_p) \cdot (g \cdot p) = (g^{-1} \cdot (G_{g \cdot p} \cdot (g \cdot p)) = p.$$
Hence $g^{-1}G_p g \subset G_p$ and thus $g \in N(G_p)$.

But, since $\Ad(g)\gg_p^\bot = \gg_p^\bot$ (by Remark \ref{2.3}),
$\varphi_t(\Exp(s Y) \cdot p)$ is a geodesic through $g \cdot p$.
Finally, the $\nabla^c$-parallel transport along $\Exp(sY) \cdot p$
are given by $\Exp(sY)_*$, which is mapped to $\Exp(sY)_* \circ
d\varphi_t$. Hence $\nabla^c$-parallel fields along geodesics are
mapped into $\nabla^c$-parallel fields.
\end{proof}

\begin{nota}\label{rem2.4}
Note that, with the same proof of Theorem \ref{2.4}, if $\varphi$ is
a diffeomorphism that commutes with $G$, then $\varphi$ is
$\nabla^c$-affine.
\end{nota}


\begin{cor} \label{killingafin}
Let $M = G/H$ be a compact normal homogeneous space and let $X$ be a $G$-invariant
field on $M$, then $X$ is a Killing field.
\end{cor}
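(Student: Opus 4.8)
The plan is to show that the local flow $\varphi_t$ of $X$ consists of local isometries, which is exactly the statement that $X$ is a Killing field. By Theorem \ref{2.4} we already know that each $\varphi_t$ is $\nabla^c$-affine, so the whole task reduces to upgrading ``affine'' to ``isometric''. The mechanism for this is that the metric tensor $\langle\,,\rangle$ is $\nabla^c$-parallel (see Section \ref{preliminaries}). Indeed, if $\varphi$ is any $\nabla^c$-affine diffeomorphism, then $\varphi^*\langle\,,\rangle$ is again $\nabla^c$-parallel, since an affine map intertwines the connection with its pullback; hence $\langle\,,\rangle$ and $\varphi^*\langle\,,\rangle$ are two $\nabla^c$-parallel symmetric tensors, and, $M$ being connected, they must coincide everywhere as soon as they coincide at a single point. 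Thus it suffices to prove that $d(\varphi_t)_p\colon T_pM \to T_{\varphi_t(p)}M$ is a linear isometry for every $t$.

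To compute this differential I would reuse the analysis carried out in the proof of Theorem \ref{2.4}. Since $\varphi_t$ commutes with $G$, the $\nabla^c$-geodesic $s \mapsto \Exp(sY)\cdot p$ (with $Y \in \gg_p^\bot$) is sent to $\varphi_t(\Exp(sY)\cdot p) = \Exp(sY)\cdot\varphi_t(p)$, and there it was shown that $\varphi_t(p) = a\cdot p$ for some $a \in N(G_p)$. Writing $\Exp(sY)\,a = a\,\Exp(s\,\Ad(a^{-1})Y)$ gives $\varphi_t(\Exp(sY)\cdot p) = a\cdot\Exp(s\,\Ad(a^{-1})Y)\cdot p$. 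Differentiating at $s = 0$, and recalling that the vectors $\widetilde Y(p)$ with $Y \in \gg_p^\bot$ span $T_pM$, one obtains
$$ d(\varphi_t)_p\big(\widetilde Y(p)\big) = da_p\big(\widetilde{\Ad(a^{-1})Y}(p)\big), $$
where $\widetilde Z$ denotes, as in Section \ref{preliminaries}, the Killing field induced by $Z$.

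It then remains to check that the right-hand side has the same length as $\widetilde Y(p)$. Since $a \in N(G_p)$, Remark \ref{2.3} gives $\Ad(a^{-1})\gg_p^\bot = \gg_p^\bot$, so $\Ad(a^{-1})Y$ again lies in $\gg_p^\bot$ and its value $\widetilde{\Ad(a^{-1})Y}(p)$ corresponds, under the isometric identification $\gg_p^\bot \cong T_pM$ coming from the Riemannian submersion $G \to M$, to $\Ad(a^{-1})Y$ itself. Because $a$ is an isometry of $M$ and $\Ad(a^{-1})$ is an isometry of the bi-invariant metric on $\gg$, this yields $\|d(\varphi_t)_p(\widetilde Y(p))\| = \|\Ad(a^{-1})Y\| = \|Y\| = \|\widetilde Y(p)\|$. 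Hence $d(\varphi_t)_p$ is a linear isometry, so by the first paragraph $\varphi_t$ is an isometry, and therefore $X$ is a Killing field.

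The one point that deserves care is the reduction in the first paragraph: affineness here is taken with respect to $\nabla^c$ rather than the Levi-Civita connection, so the classical fact that affine transformations of a compact manifold are isometries is not directly applicable. The argument must instead rely on the $\nabla^c$-parallelism of the metric together with the explicit control of $\varphi_t(p)$ and of $a \in N(G_p)$ supplied by Theorem \ref{2.4}; granting this, the computation of $d(\varphi_t)_p$ and the appeal to the $\Ad$-invariance of $\gg_p^\bot$ are routine.
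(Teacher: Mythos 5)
Your proof is correct, but it takes a genuinely different route from the paper's. The paper obtains the corollary from Theorem \ref{2.4} together with Proposition \ref{2.5}, which is a general statement: $\Aff_0(\nabla^c) \subset \Iso(M)$ for \emph{every} compact naturally reductive space. Its proof passes through the Levi-Civita connection: since $\nabla^c$ and $\nabla$ have the same geodesics, a $\nabla^c$-affine map sends Riemannian geodesics to Riemannian geodesics and is therefore $\nabla$-affine (torsion-freeness, citing \cite{P}); then a Jacobi-field lemma, which uses compactness of $M$ to kill the linearly growing tangential component of the field along a geodesic, upgrades affine Killing fields to Killing fields. You bypass both steps: you use instead that the metric is $\nabla^c$-parallel, so that $\varphi_t^*\langle\,,\rangle$ and $\langle\,,\rangle$ are two $\nabla^c$-parallel tensors which must agree globally once they agree at the single point $p$; the pointwise check is then carried out explicitly with the normalizer element $a \in N(G_p)$ extracted from the proof of Theorem \ref{2.4}, Remark \ref{2.3}, the $\Ad$-invariance of the bi-invariant metric, and the isometric identification $\gg_p^\bot \cong T_pM$ given by the Riemannian submersion $G \to M$. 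Each approach buys something: the paper's Proposition \ref{2.5} applies to arbitrary elements of $\Aff_0(\nabla^c)$ on any compact naturally reductive space, and is reused later (Remarks \ref{compact} and \ref{nrcase}), whereas your computation is tied to transformations commuting with $G$ and to the normal homogeneous structure; in exchange, your argument is self-contained (no geodesic-preservation criterion, no Jacobi-field lemma), and the affine-to-isometric upgrade itself makes no direct use of compactness, which enters only through Theorem \ref{2.4}.
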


The proof Corollary \ref{killingafin} is consequence from Theorem
\ref{2.4} and the following proposition.

\begin{prop}\label{2.5}
Let $M$ be a compact naturally reductive space with associated canonical connection
$\nabla^c$. Then $\Aff_0(\nabla^c) \subset \Iso(M)$.
\end{prop}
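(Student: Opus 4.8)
The plan is to reduce the statement to the classical fact that on a compact Riemannian manifold every affine transformation of the Levi--Civita connection $\nabla$ is an isometry. Thus it suffices to show that every $\nabla^c$-affine transformation is also $\nabla$-affine, i.e.\ that $\Aff_0(\nabla^c) \subset \Aff_0(\nabla)$; the desired conclusion $\Aff_0(\nabla^c) \subset \Iso(M)$ then follows immediately.

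The key point is that, since $M$ is naturally reductive, the difference tensor $D = \nabla^c - \nabla$ is recoverable from $\nabla^c$ alone. Indeed, because $\nabla^c$ and $\nabla$ share the same geodesics and $\nabla^c$ is metric, the torsion $T$ of $\nabla^c$ is totally skew-symmetric and $\nabla^c_X Y = \nabla_X Y + \frac{1}{2} T(X,Y)$ for all vector fields $X, Y$; this is the infinitesimal form of the coincidence of geodesics recorded in Section~\ref{preliminaries} (it is consistent with the explicit formulas $(\nabla^c_{\widetilde X}\widetilde Y)_p = -[X,Y]_\gm$ and $(\nabla_{\widetilde X}\widetilde Y)_p = -\frac{1}{2}[X,Y]_\gm$ given there). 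Now let $\varphi \in \Aff(\nabla^c)$. By definition $\varphi$ preserves $\nabla^c$, and being an affine map it also preserves the torsion tensor $T$ of $\nabla^c$, since $T$ is built from $\nabla^c$ and the (always diffeomorphism-invariant) Lie bracket. Hence $\varphi$ preserves $\nabla = \nabla^c - \frac{1}{2} T$, so $\varphi \in \Aff(\nabla)$. I expect this identification of $D$ with (half) the skew torsion, together with the verification that $\varphi$ preserves $T$, to be the main point to get right; everything else is formal.

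With this in hand, the inclusion $\Aff_0(\nabla^c) \subset \Aff_0(\nabla)$ holds, because $\Aff_0(\nabla^c)$ is connected and maps into the connected component $\Aff_0(\nabla)$. It then remains to invoke the classical theorem that on a compact Riemannian manifold every infinitesimal affine transformation is a Killing field, whence $\Aff_0(\nabla) = \Iso_0(M)$. The proof of that theorem is an integral argument of Bochner type: if $X$ is $\nabla$-affine, then $L_X\nabla = 0$ forces the second covariant derivative $\nabla^2 X$ to be controlled by the curvature, and integrating the associated divergence over the compact $M$ (using $\int_M \operatorname{div} Y\, \vol = 0$) shows that the symmetric part of $\nabla X$ must vanish, i.e.\ $X$ is Killing. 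Combining the two steps gives $\Aff_0(\nabla^c) \subset \Aff_0(\nabla) = \Iso_0(M) \subset \Iso(M)$, as claimed.
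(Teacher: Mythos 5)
Your proof is correct, and its overall skeleton is the same as the paper's: reduce the statement to the inclusion $\Aff_0(\nabla^c) \subset \Aff_0(\nabla)$, then invoke the classical fact that on a compact Riemannian manifold every infinitesimal affine transformation of the Levi--Civita connection is a Killing field. The two steps, however, are justified differently. For the first step the paper argues via geodesics: a $\nabla^c$-affine map preserves the $\nabla^c$-geodesics, which by natural reductivity are the Riemannian geodesics, and a diffeomorphism preserving the parametrized geodesics of the torsion-free connection $\nabla$ is $\nabla$-affine. You instead use the tensorial identity $\nabla = \nabla^c - \frac{1}{2}T^c$, which holds because the difference tensor of two connections with the same geodesics is skew-symmetric and hence equals half the torsion of $\nabla^c$ (note that only the coincidence of geodesics is needed here, not metricity of $\nabla^c$), together with the observation that a $\nabla^c$-affine map automatically preserves $T^c$. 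This is an equivalent, more algebraic rendering of the same fact, and your consistency check against the explicit formulas $(\nabla_{\widetilde X}\widetilde Y)_p = -\frac{1}{2}[X,Y]_\gm$ and $(\nabla^c_{\widetilde X}\widetilde Y)_p = -[X,Y]_\gm$ is correct. For the second step the paper includes its own ``conceptual'' proof of the classical lemma via Jacobi fields (an affine Killing field is Jacobi along every geodesic; compactness kills the linear tangential component, yielding the Killing equation), whereas you cite the Yano--Bochner integral argument. Your sketch of that argument is loose, but since you are invoking a well-known theorem rather than reproving it, nothing essential is missing; the paper's Jacobi-field proof is the more self-contained of the two, while your route makes the role of the skew torsion explicit.
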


\begin{proof}
Recall that $\nabla^c$ and $\nabla$ have both the same geodesics.
Now, if $\varphi$ is $\nabla^c$-affine, $\varphi$ maps (Riemannian)
geodesics into geodesics, hence $\varphi$ is $\nabla$-affine (since
$\nabla$ is torsion free, see \cite[pag.\ 107]{P}). To complete the
proof we need the following well-known lemma, for which we include a
conceptual proof.
\end{proof}

\begin{lema}
Let $M$ be a compact Riemannian manifold and let $X$ be an affine
Killing field on $M$ (i.e.\ the flow associated to $X$ preserves the
Levi-Civita connection). Then $X$ is a Killing field on $M$.
\end{lema}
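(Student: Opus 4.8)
The plan is to reduce the statement to showing that the symmetric $2$-tensor $S := \mathcal L_X g$ vanishes identically, since $X$ is Killing exactly when $\mathcal L_X g = 0$. I would first prove that $S$ is \emph{parallel} for the Levi-Civita connection $\nabla$, using only the affine hypothesis, and then run a divergence (Bochner-type) argument on the compact manifold $M$ to force $S \equiv 0$.

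For the first step I would rewrite the hypothesis infinitesimally: the flow of $X$ preserves $\nabla$ if and only if $\mathcal L_X \nabla = 0$, where $\mathcal L_X \nabla$ is the $(1,2)$-tensor measuring the infinitesimal change of the connection. The key point is that the vanishing of this tensor is precisely the obstruction to the commutation of $\mathcal L_X$ and $\nabla$ on arbitrary tensor fields; that is, when $\mathcal L_X\nabla = 0$ one has $\mathcal L_X(\nabla T) = \nabla(\mathcal L_X T)$ for every tensor $T$. Applying this to $T = g$ and recalling $\nabla g = 0$ yields $\nabla(\mathcal L_X g) = 0$, i.e.\ $S$ is parallel.

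Next I would decompose $\nabla X$, regarded via $g$ as an endomorphism field $W \mapsto \nabla_W X$, into its $g$-symmetric and $g$-skew parts, $\nabla X = \Sigma + A$. By definition $S(Y,Z) = \langle \nabla_Y X, Z\rangle + \langle Y, \nabla_Z X\rangle = 2\langle \Sigma Y, Z\rangle$, so $S^\sharp = 2\Sigma$ and $X$ is Killing precisely when $\Sigma = 0$. I then introduce the vector field $Y$ determined by $\langle Y, Z\rangle = S(X, Z)$ and compute its divergence; since $\nabla S = 0$, a short computation with a local orthonormal frame gives
$$\operatorname{div} Y = \operatorname{tr}(S^\sharp \circ \nabla X) = \operatorname{tr}\bigl(2\Sigma\circ(\Sigma + A)\bigr) = 2|\Sigma|^2,$$
where I used $\operatorname{tr}(\Sigma\circ A) = 0$ (a symmetric endomorphism composed with a skew one has zero trace) and $\operatorname{tr}(\Sigma^2) = |\Sigma|^2$. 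Integrating over the compact manifold and applying the divergence theorem gives $\int_M 2|\Sigma|^2\,\vol = \int_M \operatorname{div}Y\,\vol = 0$, so $\Sigma \equiv 0$ and $X$ is Killing.

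The two places that require care are the commutation identity $\mathcal L_X\nabla = \nabla\mathcal L_X$ on tensors (which is where the affine hypothesis is fully used) and the sign/positivity bookkeeping in the divergence computation. I expect the latter to be the genuinely load-bearing step: the argument works only because $\operatorname{div}Y$ comes out as a nonnegative multiple of $|\Sigma|^2$, so that compactness converts the vanishing of its integral into pointwise vanishing. Compactness is essential and cannot be dropped, as affine non-Killing fields on $\bbr^n$ (generators of non-orthogonal linear flows) already show.
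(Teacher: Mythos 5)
Your proof is correct, but it takes a genuinely different route from the paper's. The paper argues via Jacobi fields: since the flow of $X$ maps geodesics to geodesics, $X$ restricts to a Jacobi field along every unit-speed geodesic $\gamma$; writing its tangential part as $(at+b)\gamma'$, compactness (boundedness of $X$) forces $a=0$, and differentiating the resulting identity $\langle X\circ\gamma,\gamma'\rangle\equiv b$ gives $\langle\nabla_{\gamma'}X,\gamma'\rangle=0$ for every geodesic, which is the Killing equation by polarization. Your argument is instead the classical tensorial one (essentially Yano's original proof): $\mathcal{L}_X\nabla=0$ implies $\nabla(\mathcal{L}_X g)=0$ via the commutation identity, and then the divergence identity $\operatorname{div}(S^\sharp X)=\operatorname{tr}(S^\sharp\circ\nabla X)=2|\Sigma|^2$ together with the divergence theorem kills the symmetric part $\Sigma$ of $\nabla X$. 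Your computations check out: $S^\sharp=2\Sigma$ since the skew part drops from $\mathcal{L}_X g$, $\operatorname{tr}(\Sigma A)=0$, and parallelism of $S^\sharp$ gives $\operatorname{div}(S^\sharp X)=2|\Sigma|^2\ge 0$, so the vanishing integral forces $\Sigma\equiv 0$. As for what each approach buys: the paper's proof is more elementary --- no Lie-derivative calculus, no integral formulas --- and uses compactness only through boundedness of $X$ and completeness, so it extends verbatim to complete manifolds carrying bounded affine fields; your proof establishes the stronger intermediate fact that $\mathcal{L}_X g$ is parallel for \emph{any} affine field (no compactness needed there) and isolates the use of compactness in a single integration step, which is the standard Bochner-technique template. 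Two small caveats: the commutation identity $\mathcal{L}_X(\nabla T)=\nabla(\mathcal{L}_X T)$ when $\mathcal{L}_X\nabla=0$ is exactly where the affine hypothesis does its work, and while it is standard (Kobayashi--Nomizu, Yano), a self-contained write-up should prove it or cite it precisely; and the divergence theorem on a compact manifold requires orientability or else integration against the Riemannian density (equivalently, passing to the oriented double cover) --- harmless here, since your integrand is nonnegative, but worth stating.
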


\begin{proof}
Since $X$ is an affine Killing field, then $X$ is a Jacobi field
along any geo\-de\-sic (since the flow of $X$ map geodesics into
geodesics, due to the fact that the flow of $X$ is given by affine
transformations). Take a unit speed geodesic $\gamma(t)$ and let us
write $X(\gamma(t)) = J^T(t) + J^\bot(t)$ as the sum of two
perpendicular Jacobi fields along $\gamma(t)$, where $J^T(t) =
(at+b)\gamma'(t)$. Since $M$ is compact, any field is bounded, thus
$a = 0$. Hence $J(t) = b\gamma'(t) + J^\bot(t)$. Differentiating
with respect to $t$ both sides of the equation $b = \langle J(t),
\gamma'(t) \rangle$, we obtain that
$$\langle \nabla_{\gamma'} X, \gamma' \rangle = b\langle \nabla_{\gamma'} \gamma', \gamma'\rangle = 0.$$
Since $\gamma(t)$ is arbitrary, $X$ satisfies the Killing equation.
\end{proof}

\begin{nota} \label{compact}
Let $M$ be a compact naturally reductive space. By Proposition
\ref{2.5}, the connected component of the group of affine
transformations of $\nabla^c$ is contained in $\Iso(M)$. Moreover,
as it is not difficult to see, $\Aff_0(\nabla^c)$ is a closed
subgroup of $\Iso(M)$, and thus $\Aff_0(\nabla^c)$ is compact. For
an arbitrary linear connection in a compact manifold, the connected
component of the affine group needs not to be compact (see for
instance \cite{Z}).
\end{nota}



\section{The transvections and the affine group}\label{grupoafin}

Let $M = G/H$ be a compact normal homogeneous space and consider
$\gk$ the Lie algebra of $G$-invariant fields on $M$. In the
previous section we see that $\gk \subset \aff(\nabla^c)$, the Lie
algebra of $\Aff_0(\nabla^c)$. So, $\gk$ determines a connected Lie
subgroup $K \subset \Aff_0(\nabla^c)$. If $X$ is a $G$-invariant
field, then the flow associated to $X$ commutes with $G$. Hence $G$
and $K$ commute and we have
$$G \subset G \cdot K \subset \Aff_0(\nabla^c).$$
The purpose of this section is to prove that $G \cdot K =
\Aff_0(\nabla^c)$.

The key of the proof is to show that $G$ is a normal subgroup of
$\Aff_0(\nabla^c)$ and choose a complementary ideal of $\gg =
\Lie(G)$, which is contained in $\gk$. Note that this depends
strongly on the fact that $\Aff_0(\nabla^c)$ is a compact Lie group
(Remark \ref{compact}). In fact, we will prove that $G$ coincides
with the group of transvections of the canonical connection.

The canonical transvection group consists of all $\nabla^c$-affine
transformations that preserve the $\nabla^c$-holonomy subbundles of
the orthogonal frame bundle.

\begin{nota}
Let $M = G/H$ be a homogeneous space, where $H$ is the isotropy at
$p \in M$. It is well-known that the $\nabla^c$-parallel transport
along any curve is realized by elements of $G$. Then, if $\varphi$
is a transvection, $\varphi \in G$. Hence
$$\Tr(\nabla^c) \subset G.$$
Moreover, $\Tr(\nabla^c)$ is a connected and normal subgroup of
$\Aff_0(\nabla^c)$ (and so of $G$), which is transitive on $M$. The
Lie algebra of $\Tr(\nabla^c)$, as it is well-known, is given by
$$\tr(\nabla^c) = [\gm, \gm] + \gm$$
(not a direct sum, in general). The transvection group needs not to
be a closed subgroup of $G$. If we present $M =
\Tr(\nabla^c)/\Tr(\nabla^c)_p$, then the original canonical
connection coincides with the canonical connection associated to the
reductive decomposition $\tr(\nabla^c) = [\gm, \gm]_\gh + \gm$.
\end{nota}

\begin{prop}\label{G = Tr}
Let $M = G/H$ be a compact normal homogeneous space with associated
canonical connection $\nabla^c$. Then $G = \Tr(\nabla^c)$. In
particular, $G$ is a normal subgroup of $\Aff_0(\nabla^c)$.
\end{prop}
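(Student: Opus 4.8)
The plan is to prove the reverse inclusion $G \subseteq \Tr(\nabla^c)$, since $\Tr(\nabla^c) \subseteq G$ is already recorded in the preceding remark and $\Tr(\nabla^c)$ is connected and transitive. As both groups are connected, it suffices to establish the equality of Lie algebras $\tr(\nabla^c) = \gg$. Writing $\gm = \gh^\bot$ for the reductive complement, the remark gives $\tr(\nabla^c) = [\gm,\gm] + \gm$; since $[\gm,\gm]_\gm \subseteq \gm$, this equals $[\gm,\gm]_\gh + \gm$, where $(\cdot)_\gh$ denotes orthogonal projection onto $\gh$. Because $\gg = \gh \oplus \gm$, the identity $\tr(\nabla^c) = \gg$ is therefore equivalent to the single assertion
$$[\gm,\gm]_\gh = \gh.$$

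The heart of the argument is this last equality, which I would prove by showing that the orthogonal complement of $[\gm,\gm]_\gh$ inside $\gh$ is trivial. Let $\langle\cdot,\cdot\rangle$ be the bi-invariant inner product on $\gg$ defining the normal metric, and take $Z \in \gh$ with $\langle Z, [X,Y]_\gh\rangle = 0$ for all $X, Y \in \gm$. Since the decomposition $\gg = \gh \oplus \gm$ is orthogonal and $Z \in \gh$, the projection may be dropped, giving $\langle Z, [X,Y]\rangle = 0$. Using that $\ad(X)$ is skew-symmetric (bi-invariance), this rewrites as $\langle [Z,X], Y\rangle = 0$ for all $Y \in \gm$. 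But $[\gh,\gm] \subseteq \gm$ by Remark \ref{2.3}, so $[Z,X] \in \gm$, and being orthogonal to all of $\gm$ it must vanish. Hence $\ad(Z)|_\gm = 0$.

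The final step, and the point where the isometric (normal homogeneous) structure is essential, is to deduce $Z = 0$ from $\ad(Z)|_\gm = 0$. This follows from the faithfulness of the linear isotropy representation: since $G \subseteq \Iso(M)$, an isometry fixing $p$ is determined by its differential at $p$, so the homomorphism $H \to \SO(T_pM)$ is injective, and therefore so is its differential $\gh \to \mathfrak{so}(T_pM)$, $Z \mapsto \ad(Z)|_\gm$. Thus $Z = 0$, which yields $[\gm,\gm]_\gh = \gh$ and hence $\tr(\nabla^c) = \gg$. Comparing the Lie algebras of the connected nested groups $\Tr(\nabla^c) \subseteq G$ then gives $G = \Tr(\nabla^c)$, and the ``in particular'' statement is immediate since $\Tr(\nabla^c)$ is a normal subgroup of $\Aff_0(\nabla^c)$. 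I expect the main obstacle to be justifying the faithfulness cleanly and identifying the linear isotropy action at the Lie algebra level with $\ad(Z)|_\gm$; the algebraic reduction to $[\gm,\gm]_\gh = \gh$ is then routine.
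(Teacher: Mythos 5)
Your proposal is correct, but it reaches the key point by a genuinely different mechanism than the paper. The paper's proof is structural: since $\Tr(\nabla^c)$ is a normal subgroup of $G$, the algebra $\tr(\nabla^c)$ is an ideal of $\gg$, hence its orthogonal complement $\ga$ with respect to the bi-invariant metric is also an ideal; as $\tr(\nabla^c) \supseteq \gm$, one has $\ga \subseteq \gh$, and an $\Ad(G)$-invariant subspace contained in one isotropy algebra is contained in every isotropy algebra, so the elements of $\ga$ are Killing fields vanishing identically, whence $\ga = 0$ by effectiveness. You study exactly the same subspace --- the set of $Z \in \gh$ orthogonal to $[\gm,\gm]_\gh$ is precisely the paper's $\ga$ --- but you kill it by direct computation instead of by ideal theory: skew-symmetry of $\ad(X)$ together with $[\gh,\gm] \subseteq \gm$ gives $\ad(Z)|_\gm = 0$, and then injectivity of the linearized isotropy representation $\gh \to \mathfrak{so}(T_pM)$, $Z \mapsto \ad(Z)|_\gm$, forces $Z = 0$. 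What your route buys is independence from the normality of $\Tr(\nabla^c)$ in $G$: you only use the formula $\tr(\nabla^c) = [\gm,\gm] + \gm$, bi-invariance of the metric, and effectiveness, and the role of effectiveness is made completely explicit. The two facts you flagged as potential obstacles are indeed standard and unproblematic: under $T_pM \simeq \gm$ the isotropy representation of $H$ is $\Ad(\cdot)|_\gm$, so its differential is $\ad(\cdot)|_\gm$, and an injective Lie group homomorphism always has injective differential (if $d\rho(Z) = 0$ then $\rho(\exp tZ) = \id$ for all $t$, so $\exp tZ = e$ and $Z = 0$). The paper's argument is slightly shorter because it outsources the work to the general facts that $\Tr(\nabla^c)$ is normal in $G$ and that orthogonal complements of ideals are ideals; yours is more self-contained and computational. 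Both ultimately rest on the same two inputs: the bi-invariant metric and the effectiveness of the $G$-action.
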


\begin{proof}
Recall that $\Tr(\nabla^c)$ is a normal subgroup of $G$. Let $\ga$
be the orthogonal complement of $\tr(\nabla^c)$, that is $\gg =
\tr(\nabla^c) \oplus \ga$ (orthogonal sum). Since $\tr(\nabla^c) =
[\gm,\gm] + \gm$, then $\ga$ is orthogonal to $\gm$, and hence $\ga
\subset \gh$. So, $\ga$ is invariant by any isotropy group, since
any two isotropy groups are conjugates. Since $G$ acts effectively
on $M$, we conclude that $\ga = 0$.
\end{proof}

\begin{proof}[Proof of Theorem \ref{main}]
We will make use of $G$-invariant fields (whose associated flow is
$\nabla^c$-affine, by Theorem \ref{2.4}). Now, since $G$ is a normal
subgroup, then the Lie algebra of $G$, $\gg$ is an ideal of
$\aff(\nabla^c)$. Let $\gg'$ be a complementary ideal, i.e.\
$$\aff(\nabla^c) = \gg \oplus \gg'$$
(for example, choosing $\gg'$ to be the orthogonal complement with respect to a
bi-invariant metric in $\Aff_0(\nabla^c)$, which is compact). If $X \in \gg'$, then
$[X, \gg] = 0$. So, $X$ belongs to $\gk$, the Lie algebra of $G$-invariant fields.
Then $\Aff_0(\nabla^c) \subset G \cdot K$, which implies the equality. To complete
the proof of Theorem \ref{main} see the proof of Corollary \ref{main2} below.
\end{proof}

\begin{proof}[Proof of Corollary \ref{main2}]
From Theorems \ref{main} and \ref{fullisometry}, one has that
$$\Iso_0(M) = \Aff_0(\nabla^c) = G \cdot K.$$
Now, since $G_0$ is abelian and $G$ commutes with $K$, $G_0 \subset K$. Moreover, if
$X$ is a Killing field induced by $G_1$ which coincides with a Killing field induced
by $K$, one has $[X, \gg_1] = 0$, where $\gg_1 = \Lie(G_1)$, since $G_1$ commutes
with $K$. Hence $X = 0$, since $G_1$ is semisimple.
\end{proof}


\begin{nota} \label{nrcase} {\it The naturally reductive case.}
Let $M$ be a compact naturally reductive space with associated
canonical connection $\nabla^c$, and let $p \in M$. One has that
$\Aff_0(\nabla^c)$ is a compact subgroup of $\Iso(M)$ (by Remark
\ref{compact}). Take the presentation $M =
\Tr(\nabla^c)/\Tr(\nabla^c)_p$. In this case one cannot prove that
the flows of $\Tr(\nabla^c)$-invariant fields are $\nabla^c$-affine,
since the reductive complement is not necessarily
$\Ad(N(\Tr(\nabla^c)_p))$-invariant (see Remark \ref{2.3}). However,
with the same arguments of this section, to calculate the affine
group one can supplement $\tr(\nabla^c)$ with an ideal of
$\aff(\nabla^c)$. In fact, if $\tilde \gk$ is the Lie algebra
generated by the $\Tr(\nabla^c)$-invariant fields whose flows are
$\nabla^c$-affine, and if $\widetilde K$ is the connected Lie
subgroup of $\Aff(\nabla^c)$ associated to $\tilde \gk$, then
$$\Aff_0(\nabla^c) = \Tr(\nabla^c) \cdot \widetilde K,$$
and $\Tr(\nabla^c)$ commutes with $\widetilde K$.
\end{nota}

\begin{nota} Let $M = G/ H$ be (compact) naturally reductive
with an associated canonical connection $\nabla ^c$. It is a
well-known fact that the affine group $\Aff(\nabla^c)$ is given by
the diffeomorphisms of $M$ which map $\nabla^c$-geodesics into
$\nabla^c$-geodesics and preserve the torsion tensor. If $M$ is
simply connected there is a standard way of enlarging the group
$G$. More precisely, any linear isometry $\ell: T_pM \to T_q M$,
with $\ell(R^c_p) = R^c_q$ and $\ell(T^c_p) = T^c_q$ extends to an
isometry of $M$ (since the canonical connection has
$\nabla^c$-parallel curvature $R^c$ and torsion $T^c$). In certain
cases the standard extension is trivial. For example for $S^7 =
\Spin(7)/G_2$ (strongly isotropy irreducible presentation, see
\cite{Wo}).
\end{nota}

\subsection{The fixed points of the isotropy}\label{isofixed}

We close this section with a comment on the fixed points of the full isotropy. Let
$M = G/G_p$ be a (compact) normal homogeneous space and let $\nabla^c$ be the
associated canonical connection. Let $F_p$ be the connected component by $p$ of the
set of fixed points of $G_p$ (recall that the isotropy does not change along the set
of fixed points, by \ref{puntos fijos}). This induces a foliation $\mathcal{F}$ of
$M$ with totally geodesics leaves $F_p$, since $\nabla^c$-affine transformations are
isometries of $M$. Note that $F_p$ is a Lie group with a bi-invariant metric. In
fact, consider the subgroup $H$ of $G$ such that leaves $F_p$ invariant. Since $G$
is transitive and preserves the foliation $\mathcal{F}$, one must have that $H$ is
simply transitive on $F_p$. So, $F_p$ is naturally identified with $H$ endowed with
a bi-invariant metric. In fact, the right invariant fields on $H \simeq F_p$ are the
restrictions to $F_p$ of the Killing fields induced by $H$. Moreover, the left
invariant fields on $H$ correspond to the restrictions to $F_p$ of the $G$-invariant
fields on $M$, which are Killing fields on $M$, since the metric is normal
homogeneous (see Theorem \ref{2.4}).

\begin{proof}[Proof of Theorem \ref{torus}]
In order to apply Corollary \ref{main2} we can assume $M$ to be irreducible
(eventually, by passing to the universal cover of $M$; see \cite[Remark 6.5]{OR}).
We keep the notation of the above paragraph. Let $S_p$ be the connected component by
$p$ of the set of fixed points of $\Iso_0(M)_p$. Then $S_p$ is a totally geodesic
submanifold of $M$ which is contained in $F_p$. Moreover, it is not hard to see that
$S_p$ coincides (locally) with the Euclidean de Rham factor of the symmetric space
$F_p$ (see the above paragraph). This implies that $S_p$ is a flat torus.
\end{proof}

\section{Application to homogeneous fibrations}\label{applications}

In this section we will show that the holonomy group of a homogeneous fibration (i.e.\ the fibers are given by
the orbits of a compact Lie group) may be regarded as a subgroup of the affine transformations of the fibers,
with respect to some canonical connection. In particular, this implies that this holonomy group must be a Lie
group \cite{GW}.

General results on submersions $\pi: M \to B$ are due to Ehresmann and Hermann. Namely, {\it if any fiber is
connected and compact, then $\pi$ is a fiber bundle} \cite{E, R}. Moreover, {\it if the submersion is
Riemannian, $M$ is complete, and $M, B$ are both connected, then $\pi$ is a fiber bundle} \cite{H, R}.

Let $\pi: M \to B$ be a $G$-homogeneous (metric) fibration, where $\pi$ is a
Riemannian submersion, $M$ is complete and $G$ is a compact subgroup of the
isometries of $M$ (and all the $G$-orbits are principal). One can decompose
orthogonally $TM = \cv \oplus \ch$, where $\cv = \ker d\pi$ is the vertical
distribution and $\ch$ is the so-called horizontal distribution. One can lift
horizontally curves in $B$, and this lift is unique for any arbitrary initial
condition in the fiber. Denote by $M_b := \pi^{-1}(\{b\}) = G \cdot b$ the fiber at
$b$. The holonomy group $\Phi_b$ of $\pi$ at $b \in B$ is the group of holonomy
diffeomorphisms of $M_b$ induced by horizontal lifts of loops at $b$. Observe that,
in general, $\Phi_b$ is not necessarily a Lie group, a counterexample can be found
in \cite[Remark 9.57]{Be}.


Let us show that the holonomy group $\Phi_b$ is a subgroup of affine
transformations with respect to a canonical connection in the fiber
$M_b$ (which is a normal homogeneous space, since $G$ is compact).
We fix some notation before continuing. Let $\alpha: [0, 1] \to B$,
$\alpha(0) = \alpha(1) = b$, be a loop at $b$. For $p \in M_b$, let
$\tilde\alpha_p$ be the unique horizontal lift of $\alpha$ with
$\tilde\alpha_p(0) = p$. The holonomy diffeomorphism $\varphi \in
\Phi_b$, induced by $\alpha$, is given by $\varphi(p) =
\tilde\alpha_p(1)$. Since the horizontal distribution is
$G$-invariant, $\varphi$ commutes with $G$, i.e.\ $\varphi(g \cdot
p) = g \cdot \varphi(p)$.

Each fiber $M_b$ can be regarded as a normal homogeneous space. In
fact, let $H$ be the isotropy subgroup at $p$ and let $\nabla^c$ be
the canonical connection associated to the reductive decomposition
$\gg = \gh \oplus \gh^\bot$. Let $\varphi \in \Phi_b$. Since
$\varphi$ commutes with $G$, we can apply Remark \ref{rem2.4} to
obtain that $\varphi$ is $\nabla^c$-affine. Hence $\Phi_b \subset
\Aff(\nabla^c)$. Moreover, any element of $\Lie(\Phi_b)$ is a
$G$-invariant field.

\begin{cor}[\cite{GW}]\label{hol}
Let $\pi: M \to B$ be a $G$-homogeneous fibration, where $G$ is a compact Lie group,
and let $b \in B$. Then $\Phi_b$ is a Lie group.
\end{cor}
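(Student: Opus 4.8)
The plan is to prove Corollary \ref{hol} by exhibiting $\Phi_b$ as a closed subgroup of the affine group $\Aff(\nabla^c)$ of the fiber $M_b$, which is a Lie group by the classical theorem of Myers and Steenrod (the affine group of a connection is a Lie group) together with Remark \ref{compact} (compactness of $\Aff_0(\nabla^c)$ in the naturally reductive case). By the discussion immediately preceding the statement, every holonomy diffeomorphism $\varphi \in \Phi_b$ commutes with the transitive $G$-action on the fiber $M_b = G/H$, so by Remark \ref{rem2.4} it is $\nabla^c$-affine; thus $\Phi_b \subset \Aff(\nabla^c)$ as an abstract subgroup. The task is therefore to upgrade this inclusion to a statement that $\Phi_b$ is itself a Lie group, i.e.\ that it inherits a Lie structure as a (closed) subgroup.

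First I would recall that $\Aff(\nabla^c)$ is a Lie group acting smoothly on $M_b$, and that $\Phi_b$ is the image of the holonomy homomorphism from the loop space based at $b$. The key observation is that $\Phi_b$ is the subgroup of $\Aff(\nabla^c)$ consisting of those affine transformations arising as holonomy of $\pi$. To get a Lie structure, the cleanest route is to show that $\Phi_b$ is \emph{closed} in $\Aff(\nabla^c)$: by Cartan's closed-subgroup theorem, a closed subgroup of a Lie group is an embedded Lie subgroup. Since $\Aff_0(\nabla^c)$ is compact (Remark \ref{compact}) and $\Phi_b$ is contained in the full affine group, closedness would immediately give that $\Phi_b$ is a compact Lie group.

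Second, to establish closedness I would use the standard description of the holonomy group as generated by parallel transport along horizontal lifts of loops, together with the completeness of $M$ and the fact (quoted from Ehresmann--Hermann in the excerpt) that $\pi$ is a fiber bundle. The restricted holonomy group $\Phi_b^0$, generated by null-homotopic loops, is a connected subgroup whose Lie algebra consists of $G$-invariant fields on $M_b$, as noted in the paragraph before the statement; hence $\Phi_b^0$ is a connected Lie subgroup of $K$ (the $G$-invariant affine transformations of Theorem \ref{main}). The full group $\Phi_b$ differs from $\Phi_b^0$ by at most countably many components indexed by $\pi_1(B,b)/\pi_1(M,p)_*$, so $\Phi_b$ is a countable union of cosets of the connected Lie subgroup $\Phi_b^0$ inside the compact group $\Aff_0(\nabla^c)$.

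The main obstacle I anticipate is precisely the potential failure of closedness: a priori an abstract subgroup that is a countable union of translates of a connected Lie subgroup need not be closed, and indeed the counterexample in \cite[Remark 9.57]{Be} shows $\Phi_b$ can fail to be a Lie group for general (non-homogeneous) fibrations. The homogeneity hypothesis is what rescues us: because $\Phi_b \subset \Aff(\nabla^c)$ and $\Aff_0(\nabla^c)$ is compact, the closure $\overline{\Phi_b}$ is a compact Lie group in which $\Phi_b^0$ is open in its identity component, forcing $\Phi_b$ to have only finitely many components and hence to be closed. I would make this rigorous by arguing that the identity component of $\overline{\Phi_b}$ equals $\Phi_b^0$ (since the latter is already a connected Lie subgroup with a full-dimensional Lie algebra of $G$-invariant fields), so that $\Phi_b$, being a union of finitely or countably many cosets of an open subgroup of a compact group, is closed; Cartan's theorem then finishes the proof.
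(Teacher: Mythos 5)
Your opening step coincides with the paper's: every holonomy diffeomorphism commutes with the transitive $G$-action on the fiber, hence is $\nabla^c$-affine by Remark \ref{rem2.4}, so $\Phi_b \subset \Aff(\nabla^c)$. But the route you take from there --- proving that $\Phi_b$ is \emph{closed} in $\Aff(\nabla^c)$ and invoking Cartan's closed-subgroup theorem --- has a genuine gap: the closedness claim is false in general. Take $B = S^1$, fiber $T^2$, and the homogeneous fibration $M = \bigl([0,1]\times T^2\bigr)/\bigl((1,x)\sim(0,x+\alpha)\bigr)$ with $G = T^2$ acting by fiber translations, where the coordinates of $\alpha$ are rationally independent irrationals. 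The holonomy group is the cyclic group generated by translation by $\alpha$: a countable, dense, non-closed subgroup of $T^2 \subset \Aff(\nabla^c)$. It \emph{is} a Lie group (countable discrete), which is all the corollary asserts, but it is not closed, it has infinitely many components, and the identity component of its closure (the whole torus) strictly contains $\Phi_b^0$ (which is trivial). This example defeats each of your key steps: ``$\Phi_b^0$ is open in the identity component of $\overline{\Phi_b}$,'' ``$\Phi_b$ has only finitely many components,'' and ``hence $\Phi_b$ is closed'' all fail. There is also a circularity earlier in your argument: you speak of the Lie algebra of $\Phi_b^0$ before $\Phi_b^0$ is known to be a Lie subgroup, which is precisely the point in question; the paper's remark that $\Lie(\Phi_b)$ consists of $G$-invariant fields already presupposes the Lie structure being proved.

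The paper avoids all of this by citing the theorem of Goto \cite{G} (see also Yamabe \cite{Y}): an \emph{arcwise connected} abstract subgroup of a Lie group is a Lie subgroup --- no closedness required, the subgroup being allowed to be immersed (e.g.\ dense) rather than embedded. Applied to $(\Phi_b)_0$ inside the Lie group $\Aff(\nabla^c)$ (a Lie group by \cite{KN}), this endows $(\Phi_b)_0$ with a Lie group structure, and since $\Phi_b$ meets at most countably many cosets of $(\Phi_b)_0$, the full group $\Phi_b$ is a Lie group. This also explains why the counterexample of \cite[Remark 9.57]{Be} is not contradicted: homogeneity buys the inclusion $\Phi_b \subset \Aff(\nabla^c)$, not closedness --- and indeed the remark following the corollary in the paper is careful to assert only that the \emph{closure} of $\Phi_b$ is compact. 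To repair your argument, replace the closedness strategy by the Goto--Yamabe theorem; the compactness of $\Aff_0(\nabla^c)$ from Remark \ref{compact} is not needed for this corollary.
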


\begin{proof}
Keep the above notation. From the previous comments we have that
$(\Phi_b)_0$ is an abstract subgroup of $\Aff_0(\nabla^c)$. Hence,
from a result of Goto \cite{G} (see also Yamabe \cite{Y}),
$\Phi_b$ is a Lie group, since $\Aff(\nabla^c)$ is a Lie group
(see \cite{KN}).
\end{proof}

\begin{nota}
By Remark \ref{compact}, $\Aff(\nabla^c)$ is a compact Lie group.
Hence the closure of the holonomy group of a homogeneous fibration
$\Phi_b$ is compact in the affine group $\Aff(\nabla^c)$. Moreover,
by Proposition \ref{2.5}, $\Phi_b$ is a Lie subgroup of $\Iso(M_b)$,
where $M_b = G \cdot b$ is endowed with a normal homogeneous metric.
\end{nota}

\begin{nota} Let $\pi: M \to B$ be a $G$-homogeneous fibration,
and let $b \in B$. The fiber $M_b$ at $b$ is a (normal) homogeneous
space, namely $M_b = G/H$, where $H$ is the isotropy group at a
point $p \in M_b$. Assume that $p$ is the only fixed point of $H$ in
$M_b$. Then the holonomy group $\Phi_b$ at $b$ is trivial. In fact,
let $\alpha$ be a loop at $b$ and let $\tilde \alpha$ be the
horizontal lift of $\alpha$ with $\tilde \alpha(0) = p$. Since
$\tilde \alpha(0)$ is fixed by $H$, and $H$ maps horizontal curves
into horizontal curves, we have that $\tilde \alpha(t)$ is fixed by
the isotropy group, for all $t$. In particular, $\tilde\alpha(1) =
p$. Observe that $p$ is arbitrary, since the isotropy groups at
different points are conjugated.
\end{nota}

\end{document}